\definecolor{green}{rgb}{0,0.5,0}
\newcounter{teoremaganso}
\newcounter{appendix}
\newcounter{coryganso}
\flushbottom \addtolength{\oddsidemargin}{-1.9cm}
\renewenvironment{abstract}{\small\quotation\noindent
 {\bfseries \abstractname .}}{\endquotation \par}
\def\resetthefootnote{\renewcommand{\thefootnote}{\@arabic\c@footnote} }
\def\@principiremex#1{\trivlist
 \item[\hskip \labelsep{\bfseries #1\ \thetheo.}]\ignorespaces}
\def\opar@principiremex#1[#2]{\trivlist
 \item[\hskip \labelsep{\bfseries #1\ \thetheo\ (#2).}]\ignorespaces}
\newcommand{\newTHEOremrom}[2]{\newenvironment{#1}{\refstepcounter{theo}\@ifnextchar[{\opar@principiremex{#2}}
{\@principiremex{#2}}}{\qedB\endtrivlist}} \catcode`\@=12
\DeclareMathSymbol{\square}{\mathord}{AMSa}{"03}
\newcommand{\qedB}{\nopagebreak\hspace*{\fill}$\square$\par}
\newcommand{\Qed}{\nopagebreak\hspace*{\fill}{\vrule width6pt height6pt depth0pt}\par}
\newtheorem {bigtheo} [teoremaganso] {Theorem}
\newcommand{\N}{\ensuremath{\mathbb{N}}}
\newcommand{\R}{\ensuremath{\mathbb{R}}}
\newcommand{\F}{\ensuremath{\mathcal{F}}}
\newcommand{\mr}[1]{\mathrm{#1}}
\title{\textbf{Fake saddles and their transition maps}
\footnotetext{2020 {\it AMS Subject Classification}: 34D10, 34D20; Secondary: 34C05.} 
\footnotetext{{\it Key words and phrases}: singularities, Poincar\'e and Dulac transition maps, asymptotic expansion, uniform flatness, stability}
\footnotetext{This work has been partially funded by the Ministry of Science, Innovation and Universities of Spain through the grant PID2021-125625NB-I00  and by the Agency for Management of University and Research Grants of Catalonia through the grant 2021SGR01015.
}
}
\author{David Mar\'{\i}n\footnote{david.marin"AT"uab.cat}
\\*[.1truecm]
{\small \textsl{Departament de Matem{\`a}tiques, Edifici Cc,
Universitat Aut{\`o}noma de Barcelona,}}\\*[-.05truecm]
{\small\textsl{08193 Cerdanyola del Vall\`es (Barcelona), Spain}}
}
\date{\vspace{-5ex}}
\begin{document}

\maketitle

\begin{abstract}
We study degenerate singular points of planar vector fields  inside a (degenerate) flow box.
These kind of singularities are called fake saddles and their linear parts are always zero.
We characterize fake saddles with non-zero second-order jet and we give the first term of a uniform asymptotic expansion of the Poincar\'e map between two transverse sections to their corresponding singular fiber, determining its stability.
\end{abstract}

\section{Introduction and statements of main results}

Following \cite{NZ,PRT15} we  define a \emph{fake saddle} as a singular point having exactly two separatrices which are contained in a smooth invariant curve separating two hyperbolic sectors, see Figure~\ref{fig1}. This kind of singularities are also known as  impassable grains.
Another way to think about a fake saddle is that the corresponding vector field can be put in a form that is like a degenerate flow box, i.e. near the singularity the phase portrait consists of parallel fibers, all but one of which have no singular points, and the singular fiber (the union of the singular point with its two separatrices) has a semi-stable equilibrium point, see \cite{CGP25}. Taking two transverse sections $\Sigma_\alpha$ and $\Sigma_\omega$ to the singular fiber outside the singular point there is a transition map $\Pi_\alpha^\omega:\Sigma_\alpha\to\Sigma_\omega$ which is well-defined on both sides of the singular fiber.

The local phase portrait of a singularity with non-zero linear part (for the nilpotent case see for instance \cite[Theorem~3.5]{DLA}) is well known and prevents the singular point from being a fake saddle. Notice that the smoothness of the singular fiber is strictly necessary because the Hamiltonian vector field $y\partial_x+x^2\partial_y$ has a nilpotent singularity at the origin with exactly two separatrices that are contained in the cuspidal curve $y^2-\frac{2}{3}x^3=0$ separating two hyperbolic sectors.

The first non-trivial and generic case of a fake saddle arises when the second-order jet is non-zero. Our objective is to characterize these generic fake saddles and to analyze their transition maps in order to determine whether the behavior is attractive or repulsive on each side of the singular fiber. The techniques we employ allow us to derive the leading term of an asymptotic expansion that is uniform with respect to parameters.

\begin{figure}[t]
\begin{center}
\includegraphics{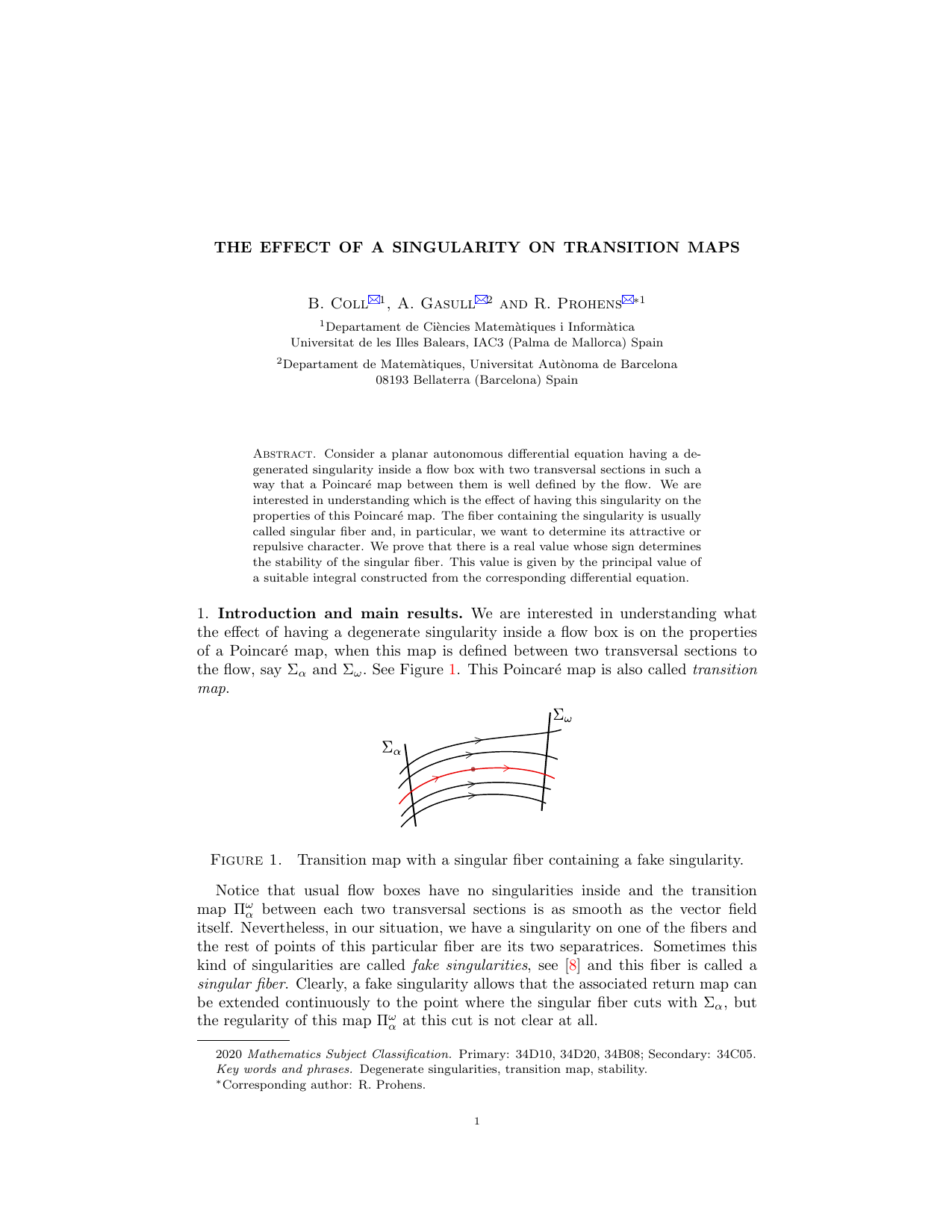}

\caption{Fake saddle and the associated transition maps.}\label{fig1}
\end{center}
\end{figure}

%
%  \begin{obs}
% There exist nilpotent singular points that are not fake saddles for which $y=0$ is a separatrix having a transition map defined only on one side $\pm y>0$. For instance, the vector field
% \[(\lambda y+x^2
% +y^2)\partial_x-\frac{xy}{\lambda}\partial_y,\quad\lambda>0,\]
%with Darboux first integral $|y|^{2\lambda}\left(x^2+\frac{\lambda}{\lambda+1}y^2+\frac{2\lambda^2}{1+2\lambda}y\right)$, has a well-defined transition map along $y=0$  only for $y>0$. The singularity at the origin has four characteristic orbits (or separatrices) $\{\pm x>0,y=0\}$, $\{\pm x>0, x^2+\frac{\lambda}{\lambda+1}y^2+\frac{2\lambda^2}{1+2\lambda}y=0\}$, separating four hyperbolic sectors.
% \end{obs}

This work is mainly motivated by the paper \cite{CGP25}, which provides a normal form for generic fake saddles that we recall in the sequel. First we can locate the fake saddle at the origin $(0,0)$. Secondly, if the vector field is at least of class $\mathscr C^3$ there exists a local diffeomorphism that rectifies the germ of singular fiber, so that it is contained in the line $y=0$ in a certain coordinate system $(x,y)$, see~\cite[Lemma 2.1]{CGP25}. Thanks to the hypothesis that the second-order jet of the fake saddle is non-zero, after a suitable rescaling of the coordinates we can assume that the fake saddle is defined by a system of differential equations
\[\left\{\begin{array}{l}
\dot x=x^2+axy+y^2+\mr O_3(x,y),\\ \dot y=y\big(cx+by+\mr O_2(x,y)\big),\end{array}\right.\]
where $a,b,c\in\R$. 
  This normal form motivates us to consider a smooth family of planar vector fields $\{X_\mu\}_{\mu\in\Omega\subset\R^n}$ having the form
 \begin{equation}\label{X}
 X_\mu(x,y)=(x^2f_1(x,y;\mu)+a(\mu)xy+y^2f_2(x,y;\mu))\partial_x+\big(xg_1(x,y;\mu)+yg_2(y;\mu)\big)y\partial_y,
 \end{equation}
 where $f_1(x,y;\mu)$, $f_2(x,y;\mu)$, $g_1(x,y;\mu)$ and $g_2(y;\mu)$ are $\mathscr C^K$-functions fulfilling $f_1(0,0;\mu)=f_2(0,0;\mu)=1$.
 We consider the following invariants 
\begin{equation}\label{abc}
a(\mu),\quad  b(\mu):=g_2(0;\mu),\quad c(\mu):=g_1(0,0;\mu)
\end{equation}
of the family (\ref{X})
and define the associated value $d(\mu):=4(1-c(\mu))-(a(\mu)-b(\mu))^2$, which will play a key role in the sequel.

In this paper we  extend the main theorems of \cite{CGP25} and we give simpler proofs of them using the results stated in \cite{MV21,MV24}.
 Our first result gives a characterization of generic fake saddles inside the family (\ref{X}), generalizing \cite[Theorem~A]{CGP25} which only treats the case $a=0$.
\begin{bigtheo}\label{car}
If the invariants $(a,b,c)$ given in (\ref{abc}) of a vector field $X$ in the family (\ref{X}) do not belong to $\{d=0\}\cap\{a^2-b^2=4\}$ then
the origin is a fake saddle
%the transition map $\Pi_\alpha^\omega$ of $X$ between $\Sigma_\alpha$ and $\Sigma_\omega$ is well defined 
if and only if, either $d>0$, or $c=1$ and $a=b$. In both situations after blowing up the origin we have a single singular point on the exceptional divisor, which is a hyperbolic saddle of hyperbolicity ratio $1-c>0$ in the first one and a semi-hyperbolic saddle in the second one. 
%In particular, in these two cases the transition map is defined on both sides $\pm y>0$ of $y=0$.
\end{bigtheo}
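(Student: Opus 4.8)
The plan is to analyze the singularity at the origin via blow-up, since the vector field has zero linear part and nonzero second-order jet. First I would perform the standard polar or directional blow-up $x = u$, $y = uv$ (and the companion chart $x=uv$, $y=v$) to desingularize. Because the singular fiber is rectified to $\{y=0\}$ and the quadratic part of $\dot x$ is $x^2 + axy + y^2$ while $\dot y$ is divisible by $y$, the exceptional divisor $\{u=0\}$ should be invariant, and the singular points on the divisor correspond to the zeros of the leading angular component. I expect to compute that after dividing by the common factor $u$ (the degree of the quadratic jet minus one), the points on the divisor are governed by a quadratic equation in the slope $v$, whose discriminant is precisely the invariant $d(\mu) = 4(1-c) - (a-b)^2$. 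The sign condition $d>0$ should then correspond to there being no real zeros of that angular factor other than the one forced by the $y\mid\dot y$ structure, leaving a single singular point on the divisor.

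Next I would identify that single remaining singular point and compute its linear part to classify it. In the regime $d>0$, I expect the eigenvalues to be real and of opposite sign, giving a hyperbolic saddle; the claim is that its hyperbolicity ratio equals $1-c$, so I would compute the two eigenvalues explicitly from the Jacobian at the divisor point and verify their ratio is $1-c>0$, which also pins down the sign condition $c<1$ implicit in $d>0$. In the degenerate regime $c=1$ and $a=b$, the discriminant $d$ vanishes together with the transversality obstruction, one eigenvalue becomes zero, and I would check that the resulting point is semi-hyperbolic, presumably by exhibiting a nonzero eigenvalue along the divisor and a zero eigenvalue transverse to it, then invoking the reduction that a semi-hyperbolic saddle produces the correct two-separatrix, two-hyperbolic-sector local picture.

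To close the equivalence, I would argue both directions. For the \emph{if} direction, once the blow-up yields a single hyperbolic saddle (or a semi-hyperbolic saddle) on the divisor, I can blow down: the two separatrices of the saddle transverse to the divisor project to the two separatrices of the fake saddle contained in $\{y=0\}$, and the hyperbolic sectors of the saddle recombine into exactly two hyperbolic sectors separated by the invariant curve, matching the definition of a fake saddle. Here I would appeal to the desingularization results in \cite{MV21,MV24} to control the local phase portrait rigorously. For the \emph{only if} direction, I would show that in the complementary cases ($d<0$, or the other degenerate configurations excluded by removing $\{d=0\}\cap\{a^2-b^2=4\}$) the blow-up produces either additional singular points on the divisor or a node/focus-type behavior, yielding elliptic or parabolic sectors incompatible with the fake-saddle definition.

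The main obstacle I anticipate is the careful bookkeeping of the blow-up in the presence of parameters and $\mathscr{C}^K$ (rather than analytic) regularity: I must ensure the division by the power of $u$ is legitimate and that the leading terms controlling the classification depend only on the invariants $(a,b,c)$ and not on the higher-order remainders $f_1, f_2, g_1$, $\mr O_3$, $\mr O_2$. The excluded locus $\{d=0\}\cap\{a^2-b^2=4\}$ is presumably exactly where the nonzero angular zero coincides with, or collides against, the forced divisor point, so confirming that this is the only genuinely degenerate stratum—and that off it the leading-order analysis is robust—will require the most delicate estimates, likely drawing on the uniform desingularization framework of \cite{MV21,MV24}.
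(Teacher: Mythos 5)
Your outline follows the paper's proof almost step for step: in the chart $(x,y)=(u,uv)$ one computes $Y=\frac{(u,uv)^*X}{u}=P(u,v)u\partial_u+Q(u,v)v\partial_v$ with $P(0,v)=1+av+v^2$ and $Q(0,v)=-v^2+(b-a)v+c-1$, whose discriminant is $-d$ (note the sign; your convention ``$d>0$ $\Leftrightarrow$ no extra real zeros'' is nevertheless the right one), the vertical direction is non-characteristic because $f_2(0,0)=1$, and the eigenvalues at the forced point $(0,0)$ are $P(0,0)=1$ and $Q(0,0)=c-1$, giving the hyperbolic saddle of hyperbolicity ratio $1-c$ when $d>0$. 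Two local corrections: in the case $c=1$, $a=b$ the \emph{nonzero} eigenvalue $P(0,0)=1$ is transverse to the divisor $\{u=0\}$ and the \emph{zero} eigenvalue lies along it (the divisor is the center manifold, on which $\dot v=-v^3$), the opposite of what you wrote; and \cite{MV21,MV24} play no role in this theorem --- they concern Dulac-map expansions and are used only for Theorem~\ref{coef}; the proof here is a direct computation.

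The genuine gap is your reading of the excluded locus $\{d=0\}\cap\{a^2-b^2=4\}$. The collision of the extra root of $Q(0,v)$ with the forced point $v=0$ occurs exactly when $c=1$ and $a=b$ --- that is the semi-hyperbolic \emph{fake-saddle} case, not the excluded stratum. What degenerates on $\{d=0\}\cap\{a^2-b^2=4\}$ is something else: when $d=0$ the double singular point sits at $v_0=(b-a)/2$, and its remaining transverse eigenvalue is $P(0,v_0)=1+av_0+v_0^2=\tfrac{1}{4}\left(4-a^2+b^2\right)$, which vanishes precisely when $a^2-b^2=4$. Off this locus, on $\{d=0\}\setminus\{c=1,\,a=b\}$, the double point keeps a nonzero eigenvalue, so it carries orbits landing on the divisor away from $v=0$; this (and likewise the two extra singular points when $d<0$) is what makes the transition map $\Pi_\alpha^\omega$ ill-defined and rules out the fake saddle --- the paper's ``only if'' mechanism, rather than your conjectured elliptic/parabolic sectors. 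On the excluded locus the extra point is fully degenerate (both eigenvalues zero), one blow-up cannot decide, and no ``delicate estimates'' can rescue a uniform conclusion: Example~\ref{hyp} shows both outcomes actually occur there ($X_3$ is not a fake saddle while $X_4$ is), so the hypothesis of Theorem~\ref{car} marks a genuine dichotomy, not a technical stratum to be absorbed. With these two repairs your plan reproduces the paper's argument.
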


The hypothesis $(a,b,c)\notin\{d=0\}\cap\{a^2-b^2=4\}$ in Theorem~\ref{car} (which is always verified when $a=0$) can not be removed as Example~\ref{hyp} will show.

Let $\Omega$ be an open subset of $\R^n$ and let $W$ any subset of $\Omega$. We recall (cf. \cite[Definition 1.2]{MV21}) that a $\mathscr C^K$-function $f(s;\mu)$ defined in the intersection of $(0,+\infty)\times\Omega$ with an open neighborhood of $\{0\}\times\Omega\subset\R^{n+1}$  belongs to the flat class $\F_L^K
(W)$ if for each $\mu_0\in W$ and $\nu=(\nu_0,\ldots,\nu_n)\in\N^{n+1}$ with $|\nu|=\nu_0+\cdots+\nu_n\le K$ 
there exists $\varepsilon>0$ such that $|\partial_s^{\nu_0}\partial_{\mu_1}^{\nu_1}\cdots\partial_{\mu_n}^{\nu_n} f(s;\mu)|\le C s^{L-\nu_0}$ for every $s\in(0,\varepsilon)$ and $\mu\in\mathbb B_\varepsilon(\mu_0)\cap\Omega$. 

We consider the transverse sections $\Sigma_\star=\{x=\star\}$ to $y=0$, for $\star\in\{\alpha,\omega\}$, where $\alpha<0<\omega$. We assume that $f_1(x,0)>0$ for all $x\in[\alpha,\omega]$ and we consider the transition map $\Pi_\alpha^\omega:\Sigma_\alpha\to\Sigma_\omega$. Our second result extends \cite[Theorem~B]{CGP25}, which only treats the case $a=b=0$, $\partial_yf_1=\partial_y g_1=0$, $f_2=1$ and $g_2=0$, when the transition map is  restricted to $y>0$. In that theorem
the authors characterize the stability of the transition map from the sign of  the Cauchy principal value
\[\mr{PV}\int_\alpha^\omega\frac{g_1(x,0;\mu)}{xf_1(x,0;\mu)}dx:=\lim\limits_{\varepsilon\to 0^+}\left(\int_\alpha^{-\varepsilon}\frac{g_1(x,0;\mu)}{xf_1(x,0;\mu)}dx+\int_{\varepsilon}^\omega\frac{g_1(x,0;\mu)}{xf_1(x,0;\mu)}dx\right).\]

\begin{bigtheo}\label{coef}
Assume that $\mu_0\in\Omega$ and that the invariants (\ref{abc}) of the vector field $X_{\mu_0}$ in (\ref{X})  belong to  $\{d=4(1-c)-(a-b)^2>0\}$. If $\mu\approx\mu_0$ then the origin is a fake saddle of $X_\mu$ and its transition map 
 $\Pi_\alpha^\omega:\Sigma_\alpha\to\Sigma_\omega$ satisfies $\Pi_\alpha^\omega(y;\mu)=e^{\gamma_\pm(\mu)} y+\F_{1+\epsilon}^K(\{d>0\})$ on $\pm y\ge 0$,
where $\epsilon>0$ and
\[\gamma_\pm(\mu)=\mr{PV}\int_\alpha^\omega\frac{g_1(x,0;\mu)}{xf_1(x,0;\mu)}dx\pm\frac{\pi(2b(\mu)-c(\mu)(a(\mu)+b(\mu)))}{\sqrt{d(\mu)}}.\]
\end{bigtheo}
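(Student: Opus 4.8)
The plan is to reduce the computation of $\Pi_\alpha^\omega$ to a composition of elementary transition maps after blowing up the origin, and then to invoke the uniform asymptotic expansions with controlled flatness established in \cite{MV21,MV24}. By Theorem~\ref{car}, under the hypothesis $d(\mu_0)>0$ the (homogeneous) blow-up of the origin produces, for every $\mu\approx\mu_0$, a single singular point $p_\mu$ on the exceptional divisor $\mathcal D$, which is a hyperbolic saddle of hyperbolicity ratio $r(\mu)=1-c(\mu)>0$ depending in a $\mathscr C^K$ way on $\mu$. In the directional chart $(x,v)$ with $y=xv$ the saddle sits at $(x,v)=(0,0)$, its stable manifold is the divisor $\{x=0\}$ and its unstable manifold is the strict transform $\{v=0\}$ of the singular fiber $\{y=0\}$; the sections $\Sigma_\alpha$ and $\Sigma_\omega$ are then transverse to the two branches $W^u_-$ ($x<0$) and $W^u_+$ ($x>0$) of this unstable separatrix.

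The key geometric observation is that an orbit through $\Sigma_\alpha$ with small $\pm y>0$ shadows $W^u_-$ up to a neighbourhood of $p_\mu$, passes near the saddle, crosses the divisor inside the opposite chart $(u,y)$ with $x=yu$ --- where, since $d>0$, the divisor carries no singularity and the crossing is a regular transition --- and finally escapes along $W^u_+$ to $\Sigma_\omega$. Accordingly I would factor $\Pi_\alpha^\omega=D_+\circ T\circ D_-^{-1}$, where $D_\pm$ are the Dulac (corner) maps of the saddle attached to $W^u_\pm$ and to the divisor, and $T$ is the regular passage across $\{x=0\}$ in the chart $(u,y)$. Taking as coordinates the heights $y$ on $\Sigma_\alpha$ and on $\Sigma_\omega$, the two Dulac maps contribute opposite exponents: $D_-^{-1}$ behaves like $y\mapsto y^{1/r}$ and $D_+$ like $y\mapsto y^{r}$, while $T$ is asymptotically linear. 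Since the incoming and outgoing sections both sit on the \emph{unstable} separatrix, these exponents cancel, $r\cdot\tfrac1r=1$, and the composite is linear to leading order, $\Pi_\alpha^\omega(y;\mu)=e^{\gamma_\pm(\mu)}\,y+(\text{flat remainder})$. The expansions of \cite{MV21,MV24} guarantee that the subleading terms of each factor produce, after composition, a remainder lying uniformly in $\F_{1+\epsilon}^K(\{d>0\})$ for some $\epsilon>0$.

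It remains to identify $\gamma_\pm$ as the logarithm of the leading coefficient. Away from the saddle the orbit stays $\mathscr C^K$-close to $W^u=\{y=0\}$, where $\frac{d}{dx}\ln y=\frac{\dot y}{y\dot x}\big|_{y=0}=\frac{g_1(x,0;\mu)}{xf_1(x,0;\mu)}$; integrating this over $[\alpha,\omega]$ and regularising the pole at $x=0$ yields the principal-value term of $\gamma_\pm$. The contribution of the passage near $p_\mu$ is computed inside the chart $(u,y)$: on the divisor one has $\frac{d}{du}\ln y=\frac{c(\mu)u+b(\mu)}{(1-c(\mu))u^2+(a(\mu)-b(\mu))u+1}$, whose improper integral evaluates to
\[\int_{-\infty}^{+\infty}\frac{c\,u+b}{(1-c)u^{2}+(a-b)u+1}\,du=\frac{\pi}{\sqrt{d}}\cdot\frac{2b-c(a+b)}{1-c}.\]
Multiplying by the hyperbolicity ratio $r=1-c$ that weights this factor in the composite $D_+\circ T\circ D_-^{-1}$ turns this into $\pm\frac{\pi(2b-c(a+b))}{\sqrt d}$, the denominator $\sqrt d$ appearing precisely because the discriminant of the quadratic is $-d<0$. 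The sign $\pm$ reflects the side of the singular fiber: for $\pm y>0$ the orbit crosses the divisor in opposite senses (the variable $u=x/y$ runs from $-\infty$ to $+\infty$ or the reverse), so the even principal-value part is common to both sides while the odd residue part changes sign.

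The main obstacle I anticipate is twofold and entirely technical. First, one must keep exact track of the leading coefficient through the composition: the two directional charts are desingularised by dividing the field by $x$ and by $y$ respectively, so the time reparametrisations do not match, and one has to verify that the weights $1/r$ and $r$ carried by $D_-^{-1}$ and $D_+$ combine so that the principal-value part appears with coefficient $1$ while the residue part appears with the factor $1-c$ that cancels the $\tfrac{1}{1-c}$ in the integral above. Second, and more delicate, is to show that the remainder belongs to the \emph{uniform} flat class $\F_{1+\epsilon}^K(\{d>0\})$: this requires applying the asymptotic-expansion theorems of \cite{MV21,MV24} to each factor with explicit control of the flatness as $\mu\to\mu_0$ inside $\{d>0\}$, and checking that the classes $\F_L^K$ are stable under the inversion and composition used to assemble $D_+\circ T\circ D_-^{-1}$. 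Once these two points are settled the stability statement follows at once from the sign of $\gamma_\pm(\mu)$.
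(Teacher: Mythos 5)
Your proposal is correct in substance and follows the same overall strategy as the paper: blow up the origin, write $\Pi_\alpha^\omega$ as a composition of corner (Dulac) maps whose reciprocal exponents $1-c$ and $1/(1-c)$ cancel, invoke the uniform expansions of \cite{MV21,MV24} to place the remainder in $\F_{1+\epsilon}^K(\{d>0\})$, and get $\gamma_-$ from $\gamma_+$ by a sign/orientation argument (the paper uses the symmetry $(x,y)\mapsto(x,-y)$, which sends $(a,b,c)$ to $(-a,-b,c)$; your ``reversed crossing of $u=x/y$'' is equivalent). The differences are in the implementation, and they matter. First, the paper does not use your three-factor decomposition $D_+\circ T\circ D_-^{-1}$: it chooses the charts $\pi_\pm(u,v)=(\pm u(1-v),uv)$, in which the section $\Sigma_0=\{x=0\}$ is simultaneously the exit section of the first corner map and the entry section of the second, so that $\Pi_\alpha^\omega=D_+\circ D_-$ exactly, with both factors honest Dulac maps oriented with the flow; this absorbs your regular passage $T$ into the section choices and, importantly, avoids inverting a Dulac map, so no inversion-stability lemma for the classes $\F_L^K$ is needed (composition alone is handled by \cite[Lemma A.2]{MV21}). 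Second, the coefficient is obtained in the paper by plugging the data into the closed formula of \cite[Theorem~A]{MV24} (via the normalizing integrals $L_1^\pm$, $L_2^\pm$) and then proving the identity $F(a,b,c)\equiv-\pi$ by an $\arctan$ computation, whereas you compute the divisor contribution directly: your formula $\frac{d\ln y}{du}=\frac{cu+b}{(1-c)u^2+(a-b)u+1}$ in the chart $x=uy$ is right, and the value $\frac{\pi(2b-c(a+b))}{(1-c)\sqrt d}$, raised through the composition to the power $1-c$, does give the stated constant. One caveat: your displayed integral over $(-\infty,+\infty)$ does not converge absolutely --- the integrand has tails $\sim\frac{c}{(1-c)u}$, so it exists only as a principal value at infinity, and the discarded logarithmic divergences are precisely the terms that must cancel against the normalizations of the two Dulac maps near the corners (also, strictly, the orbit does not ``cross'' the invariant divisor; it shadows it while crossing $\{x=0\}$). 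That matching is the genuine content hidden in your ``technical obstacle'' paragraph, and it is exactly what the paper's functions $L_i^\pm$ and the computation $F\equiv-\pi$ carry out in closed form; until it is performed, your identification of $\gamma_\pm$ is a leading-order heuristic, though one that the machinery of \cite{MV24} you cite is designed to complete.
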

\begin{obs}\label{PV}
Using that $f_1(0,0)=1$ and $g_1(0,0)=c$ we can explicitly compute the previous Cauchy principal value using a convergent integral:
\[\mr{PV}\int_\alpha^\omega\frac{g_1(x,0)}{xf_1(x,0)}dx
%:=\lim\limits_{\varepsilon\to0^+}\int_{(\alpha,\omega)\setminus(-\varepsilon,\varepsilon)}\frac{g_1(x,0)}{xf_1(x,0)}dx
=c\log\left|\frac{\omega}{\alpha}\right|+\int_\alpha^\omega\left(\frac{g_1(x,0)}{f_1(x,0)}-c\right)\frac{dx}{x}.\]
\end{obs}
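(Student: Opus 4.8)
The plan is to reduce the principal value to the sum of an explicitly integrable logarithmic part and a genuinely convergent remainder, exploiting that the numerator of the integrand takes the value $c$ at the origin. First I would set $h(x):=g_1(x,0;\mu)/f_1(x,0;\mu)$ and record two facts about it on the closed interval $[\alpha,\omega]$: since $f_1(x,0)>0$ there (an assumption already in force) and $f_1,g_1$ are of class $\mathscr C^K$ with $K\ge 1$, the quotient $h$ is $\mathscr C^K$ on $[\alpha,\omega]$; and $h(0)=g_1(0,0)/f_1(0,0)=c/1=c$ by the normalisations $f_1(0,0)=1$ and $g_1(0,0)=c$.

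Next I would split the integrand as $h(x)/x=c/x+(h(x)-c)/x$ and treat the two pieces separately, invoking the linearity of the principal value. For the remainder $(h(x)-c)/x$ the key observation is that $h(x)-c=h(x)-h(0)$ vanishes at $x=0$; since $h$ is $\mathscr C^1$, a first-order Taylor expansion (equivalently, the mean value theorem) gives $h(x)-c=\mr O(x)$, so $(h(x)-c)/x$ extends continuously across $x=0$ and is therefore bounded on $[\alpha,\omega]$. Consequently $\int_\alpha^\omega(h(x)-c)\,dx/x$ converges as an ordinary Riemann integral and its principal value coincides with this ordinary value, which is exactly the second term on the right-hand side of the claimed identity.

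For the singular piece I would compute $\mr{PV}\int_\alpha^\omega c\,dx/x$ by hand. With $\alpha<0<\omega$, for small $\varepsilon>0$ one has $\int_\alpha^{-\varepsilon}c\,dx/x=c(\log\varepsilon-\log|\alpha|)$ and $\int_\varepsilon^\omega c\,dx/x=c(\log\omega-\log\varepsilon)$; the divergent terms $\pm c\log\varepsilon$ cancel in the sum, and letting $\varepsilon\to 0^+$ leaves $c(\log\omega-\log|\alpha|)=c\log|\omega/\alpha|$. Adding the two contributions yields the stated formula. The only point that needs any care is the regularity claim $h\in\mathscr C^1$ together with the vanishing $h(0)=c$, which is precisely what renders the remainder integrable and legitimises splitting a principal value into a convergent integral plus an elementary one; everything else is a direct computation, so I do not anticipate a genuine obstacle here.
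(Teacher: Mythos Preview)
Your argument is correct and is exactly the natural justification: write $h(x)=g_1(x,0)/f_1(x,0)$, subtract the value $c=h(0)$ to make $(h(x)-c)/x$ continuous on $[\alpha,\omega]$ (hence an ordinary integral), and compute $\mr{PV}\int_\alpha^\omega c\,dx/x=c\log|\omega/\alpha|$ by the explicit cancellation of the $\pm c\log\varepsilon$ terms. The paper states this identity as a remark without proof, so there is nothing further to compare; your write-up supplies precisely the verification the paper leaves implicit.
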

\begin{obs}
Clearly the sign of $\gamma_\pm(\mu)$ determines the stability of the singular fiber $y=0$ on the side $\pm y>0$. In the case $a=b=0$ the two values $\gamma_\pm(\mu)$ coincide with the principal value of the integral. Otherwise the stability of the two sides $\pm y>0$ can be different, see Example~\ref{ex6}.
\end{obs}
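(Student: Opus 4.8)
All three assertions will follow directly from \teoc{coef}, so the plan is to extract them from the asymptotic expansion rather than to redo any analysis. First I would record the stability criterion. By \teoc{coef}, on $\pm y\ge 0$ the transition map has the form $\Pi_\alpha^\omega(y;\mu)=e^{\gamma_\pm(\mu)}y+R_\pm(y;\mu)$ with $R_\pm\in\F_{1+\epsilon}^K(\{d>0\})$ for some $\epsilon>0$. Since the flat exponent is $L=1+\epsilon>1$, the defining inequalities of $\F_L^K$ for the multi-indices $\nu=(0,\dots,0)$ and $\nu=(1,0,\dots,0)$ give $|R_\pm(y;\mu)|\le C|y|^{1+\epsilon}$ and $|\partial_yR_\pm(y;\mu)|\le C|y|^{\epsilon}$, whence $R_\pm$ and its one-sided $y$-derivative vanish at $y=0$. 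Therefore the multiplier of the transition map along the fiber is $\partial_y\Pi_\alpha^\omega(0^\pm;\mu)=e^{\gamma_\pm(\mu)}$.

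Next I would translate this multiplier into stability. Because $\dot x=x^2+\mr{O}_3(x,y)>0$ for $x\neq0$ near the fiber, the flow crosses from $\Sigma_\alpha$ to $\Sigma_\omega$, and since $e^{\gamma_\pm(\mu)}>0$ the map preserves the sign of $y$. Hence on the side $\pm y>0$ a passing orbit ends strictly closer to (resp. farther from) the fiber $y=0$ exactly when $e^{\gamma_\pm(\mu)}<1$ (resp. $>1$), i.e. when $\gamma_\pm(\mu)<0$ (resp. $\gamma_\pm(\mu)>0$). This is precisely the claim that the sign of $\gamma_\pm(\mu)$ decides the attractive/repulsive character of the fiber on each side.

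For the remaining two assertions I would only manipulate the closed formula for $\gamma_\pm(\mu)$ from \teoc{coef}. Setting $a(\mu)=b(\mu)=0$ makes the numerator $2b(\mu)-c(\mu)(a(\mu)+b(\mu))$ vanish, so the boundary term $\pm\pi(\cdots)/\sqrt{d(\mu)}$ disappears and both values collapse to $\gamma_+(\mu)=\gamma_-(\mu)=\mr{PV}\int_\alpha^\omega g_1(x,0;\mu)/(xf_1(x,0;\mu))\,dx$, as stated. For the last assertion I would note that $\gamma_+(\mu)-\gamma_-(\mu)=2\pi(2b(\mu)-c(\mu)(a(\mu)+b(\mu)))/\sqrt{d(\mu)}$, which is nonzero whenever $2b\neq c(a+b)$; when this gap dominates twice the principal value in modulus, $\gamma_+(\mu)$ and $\gamma_-(\mu)$ have opposite signs and, by the criterion above, the two sides exhibit different stability. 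A concrete set of invariants realizing this situation is exhibited in \exc{ex6}. None of these steps is an obstacle: the only point requiring a little care is the vanishing of the first derivative of the flat remainder, which is guaranteed exactly by the strict inequality $\epsilon>0$ in the flat exponent.
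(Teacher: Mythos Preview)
Your proposal is correct and simply makes explicit what the paper leaves as an evident consequence of \teoc{coef}; the paper offers no separate argument for this remark beyond the word ``Clearly'' and the pointer to \exc{ex6}. The only cosmetic inaccuracy is the shorthand $\dot x=x^2+\mr O_3(x,y)$, which near the fiber should read $\dot x=x^2f_1(x,0;\mu)+\mr O(y)$ with $f_1(x,0;\mu)>0$ on $[\alpha,\omega]$; this does not affect your conclusion that orbits cross from $\Sigma_\alpha$ to $\Sigma_\omega$.
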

It is worth to be noticed that the writing of $\Pi_\alpha^\omega(y;\mu)$ in the statement  implies that it is of the form $e^{\gamma_\pm(\mu)}y+\mr{o}(y)$ but also that the remainder term $\mr{o}(y)$ is uniform with respect to the parameter $\mu$.  This uniformity allows to address cyclicity problems and not just study stability. For clarity in the exposition we will omit the dependence on $\mu$ when it is not essential.

\section{Proofs of the main results}

\begin{proof}[Proof of Theorem~\ref{car}]
A straightforward computation shows that
\[Y=\frac{(u,uv)^*X}{u}=P(u,v)u\partial_u+Q(u,v)v\partial_v\]
with $P(0,0)=1$ and $Q(0,v)=-v^2+(b-a)v+c-1$. The point $(u,v)=(0,0)$ is always a singular point of $Y$.  If $d<0$ then $Y$ has two other singular points on the exceptional divisor $u=0$ with at least one non-zero eigenvalue.  If $(a,b,c)\in\{d=0\}\setminus(\{c=1,a=b\}\cup\{a^2-b^2=4\})$ then $Y$ has another double singular point at $(u,v)=(0,(b-a)/2)$ with a non-zero eigenvalue. In these situations the transition map $\Pi_\alpha^\omega:\{x=\alpha\}\to\{x=\omega\}$ is not well defined so that the origin is not a fake saddle of $X$. The remaining assertions are easy to check.
 \end{proof}

\begin{proof}[Proof of Theorem~\ref{coef}]
Let us compute first the value $\gamma_+$, i.e. we assume that $y\ge 0$.
Consider the charts 
 $(x,y)=\pi_\pm(u,v)=(\pm u(1-v),uv)$ of the blow-up of the origin, see Figure~\ref{B1}.
 
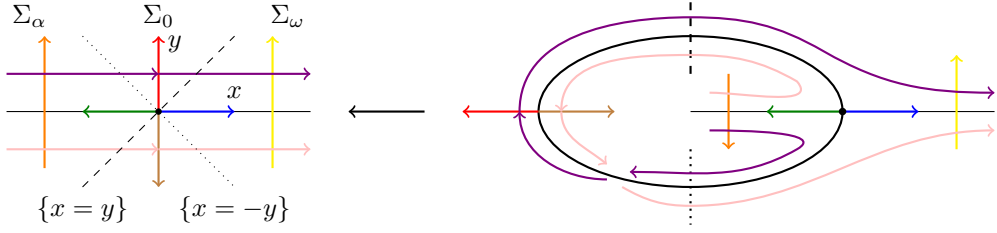
\begin{figure}[t]
\begin{center}
\begin{tikzpicture}
%\draw[help lines] (-7,-6) grid (7,2);
\begin{scope}[xshift=-4cm]
\draw (-2,0) to (2,0);
\draw[->,orange,thick] (-1.5,-0.75) to (-1.5,1);
\draw[->,yellow,thick] (1.5,-0.75) to (1.5,1);
\draw[->,blue,thick] (0,0) to (1,0);
\draw[->,red,thick] (0,0) to (0,1);
\draw[->,green,thick] (0,0) to (-1,0);
\draw[->,brown,thick] (0,0) to (0,-1);
\draw[fill] (0,0) circle [radius=1pt];
\draw [violet,thick,->] (-2,.5) to (0,.5);
\draw[violet,thick,->] (0,.5) to (2,.5);
\draw [pink,thick,->] (-2,-.5) to (0,-.5);
\draw[pink,thick,->] (0,-.5) to (2,-.5);
\draw[dashed] (-1,-1) to (1,1);
\draw[dotted] (-1,1) to (1,-1);
\node at (-1.7,1) [anchor=south] {$\Sigma_\alpha$};
\node at (0,1) [anchor=south] {$\Sigma_0$};
\node at (1.7,1) [anchor=south] {$\Sigma_\omega$};
\node at (-1,-1) [anchor=north] {$\{x=y\}$};
\node at (1,-1) [anchor=north] {$\{x=-y\}$};
\node at (1,0.05) [anchor=south] {$x$};
\node at (0,.9) [anchor=west] {$y$};
\end{scope}
\draw[<-,thick] (-1.5,0) to (-0.5,0);
\begin{scope}[xshift=3cm,thick]
\draw (0,0) ellipse [x radius=2,y radius=1];
\draw[thin] (0,0) to( 4,0);
\draw[blue,->] (2,0) to(3,0);
\draw[green,->](2,0) to (1,0);
\draw[red,->] (-2,0) to(-3,0);
\draw[brown,->] (-2,0) to (-1,0);
\draw[orange,->] (.5,.5) to (.5,-0.5);
\draw[yellow,->] (3.5,-.5) to (3.5,0.75);
\draw[dotted] (0,-0.5) to (0,-1.5);
\draw[dashed] (0,.5) to (0,1.5);
\draw[violet,->]  (.25,-0.25) to [out=0,in=30]  (1.4,-.5) to [out=210,in=0] (-0.8,-0.8);
\draw[violet,->] (-1.1,-0.9) to [out=180,in=-90] (-2.25,0) ;
\draw[violet,->](-2.25,0) to [out=90,in=180] (0,1.25) to [out=0,in=150] (1.75,.8) to [out=-30,in=180] (4,.25);
\draw[pink,->] (.25,.25) to [out=0,in=-50] (1.4,.4) to [out=150, in=0] (0,.75) to [out=180,in=90] (-1.7,0);
\draw[pink,->] (-1.7,0) to [out=-90,in=140] (-1.1,-.7);
\draw[pink,->] (-.9,-1) to [out=-30,in=180] (0,-1.25)  to [out=0,in=180] (4,-0.25);
\draw[fill] (2,0) circle [radius=1pt];
\end{scope}
\end{tikzpicture}
\end{center}
\caption{Blowup of the fake saddle at the origin.}\label{B1}
\end{figure}

\noindent Notice that $v=\frac{y}{y\pm x}$ is a coordinate on the exceptional divisor $u=0$.
 We assume that $u,v\ge 0$, we write
\[\frac{\pi_\pm^*X}{u}=P_\pm(u,v)u\partial_u+Q_\pm(u,v)v\partial_v\]
and define
\[X_\pm(x,y)=P_1^\pm(x,y)x\partial_x+P_2^\pm(x,y)y\partial_y\]
with
\[P_1^-(x,y)=Q_-(y,x),\quad P_2^-(x,y)=P_-(y,x),\quad P_1^+(x,y)=P_+(x,y),\quad P_2^+(x,y)=Q_+(x,y).\]
It can be checked that $\lambda_\pm=-\frac{P_2^\pm(0,0)}{P_1^\pm(0,0)}>0$, so that $(0,0)$ is a hyperbolic saddle of $X_\pm$.
Consider the Dulac map $D_\pm$ of $X_\pm$ between the parametrized transverse sections
\begin{equation}\label{sigma}
\sigma_1^-(s)=\Big(\frac{s}{-\alpha+s},-\alpha+s\Big),\quad\sigma_2^-(s)=(1,s),\quad \sigma_1^+(s)=(s,1),\quad \sigma_2^+(s)=\Big(\omega+s,\frac{s}{\omega+s}\Big).
\end{equation}
We know from \cite[Theorem A]{MV21} that $D_\pm(s;\mu)=s^{\lambda_\pm(\mu)}(\Delta_{00}^\pm(\mu)+\F_\epsilon)$ for some $\epsilon>0$, where we write $\F_\epsilon$ instead of $\F_\epsilon^K(\{\lambda_\pm>0\})$ for brevity. 
Then, using \cite[Lemma A.2]{MV21}, we can write
\[\Pi_\alpha^\omega(y)=(D_+\circ D_-)(y)=\Big(y^{\lambda_-}(\Delta_{00}^-+\F_\epsilon)\Big)^{\lambda_+}(\Delta_{00}^++\F_\epsilon)=\Delta_{00}y+\F_{1+\epsilon},\]
with 
$\Delta_{00}=(\Delta_{00}^-)^{\lambda_+}\Delta_{00}^+$. Following \cite{MV24}, 
to compute the coefficients $\Delta_{00}^\pm$ we must introduce some auxiliary functions:
\begin{align*}
R_{12}^-(y)&=\frac{P_1^-}{P_2^-}(0,y)=\frac{g(-y,0)}{f(-y,0)}-1,\\
R_{21}^-(x)&=\frac{P_2^-}{P_1^-}(x,0)=\frac{1}{1-c}\left[\frac{\left(-a +b +c -2\right) x^{2}+\left(a -c +2\right) x-1}{\left(a -b -c +2\right) x^{2}+\left(-a +b +2 c -2\right) x +1-c}\right],\\
R_{12}^+(y)&=\frac{P_1^+}{P_2^+}(0,y)=\frac{1}{1-c}\left[\frac{\left(-a +b -c +2\right) y^{2}+\left(a +c -2\right) y+1}{\left(a -b +c -2\right) y^{2}+\left(-a +b -2 c +2\right) y +c -1}\right],\\
R_{21}^+(x)&=\frac{P_2^+}{P_1^+}(x,0)=\frac{g(x,0)}{f(x,0)}-1.
\end{align*}
Notice that $\lambda:=\lambda_+=1-c>0$ and 
and $\lambda_-=\frac{1}{\lambda_+}$. According to \cite[p. 47]{MV24}
we consider the functions
\begin{align*}
\log L_1^-(u)&:=\int_0^u\left(R_{12}^-(y)+\frac{1}{\lambda_-}\right)\frac{dy}{y}=\int_0^{-u}\left(\frac{g(x,0)}{f(x,0)}-c\right)\frac{dx}{x},\\
\log L_2^-(u)&:=\int_0^u\left(R_{21}^-(x)+{\lambda_-}\right)\frac{dx}{x}=\frac{1}{1-c}\int_0^u\frac{\Big(c \left(a -b -c +2\right) x-\left(a c -c^{2}-b +c \right) \Big)dx}{\left(a -b -c +2\right) x^{2}-\left(a -b -2 c +2\right)  x -\left(c -1\right)},\\
\log L_1^+(u)&:=\int_0^u\left(R_{12}^+(y)+\frac{1}{\lambda_+}\right)\frac{dy}{y}=\frac{1}{1-c}\int_0^u \frac{\Big(c \left(a -b +c -2\right) y-\left(a c +c^{2}-b -c \right) \Big)dy}{\left(a -b +c -2\right) y^2-\left(a -b +2 c -2\right)  y +\left(c -1\right)},\\
\log L_2^+(u)&:=\int_0^u\left(R_{21}^+(x)+{\lambda_+}\right)\frac{dx}{x}=\int_0^{u}\left(\frac{g(x,0)}{f(x,0)}-c\right)\frac{dx}{x}.
\end{align*}
From (\ref{sigma}) we compute the partial derivatives $\sigma_{ijk}^\pm=\partial_s^k\sigma_{ij}^\pm(0)$ of the parametrizations $\sigma_i^\pm(s)=(\sigma_{i1}^\pm(s),\sigma_{i2}^\pm(s))$:
\[\sigma^-_{111}=-1/\alpha,\quad \sigma^-_{120}=-\alpha,\quad \sigma^-_{210}=1,\quad \sigma^-_{221}=1,\quad \sigma_{111}^+=1,\quad \sigma_{120}^+=1,\quad \sigma_{210}^+=\omega,\quad \sigma_{221}^+=1/\omega.
\]
Then, according to \cite[Theorem~A]{MV24} and Remark~\ref{PV}, we obtain the following expression:
\begin{align*}\Delta_{00}&=(\Delta_{00}^-)^{\lambda_+}\Delta_{00}^+=\frac{\sigma_{111}^-(\sigma_{120}^-)^\lambda (L_2^-(\sigma_{210}^-))^{\lambda}(\sigma_{111}^+)^{\lambda}\sigma_{120}^+L_2^+(\sigma_{210}^+)}{L_1^-(\sigma_{120}^-)(\sigma_{221}^-)^{\lambda}\sigma_{210}^-(L_1^+(\sigma_{120}^+))^\lambda\sigma_{221}^+(\sigma_{210}^+)^\lambda}=\frac{(-\alpha)^{-1+\lambda}L_2^+(\omega)}{\omega^{-1+\lambda}L_1^-(-\alpha)}\left(\frac{L_2^-(1)}{L_1^+(1)}\right)^\lambda\\
&=\exp\left( \mr{PV}\int_\alpha^\omega\frac{g_1(x,0)}{xf_1(x,0)}dx+\gamma_0\right),
\end{align*}
where $\gamma_0:=\lambda\log\left(\frac{L_2^-(1)}{L_1^+(1)}\right)$.
Let us prove now that 
\[\gamma_0=-\frac{\pi(c(a+b)-2b)}{\sqrt{d}}.\]
With this aim notice that
\[\log L_1^+(u;a,b,c)=\log L_2^-(u;-a,-b,c)=\alpha(u;a,b,c)+\beta(u;a,b,c)\]
where
\[\alpha(u;a,b,c)=\frac{c}{2(1-c)}\log\left(\frac{1-c+(a-b+2c-2)u+(-a+b-c+2)u^2}{1-c}\right)\]
and
\[\beta(u;a,b,c)={\scriptstyle-\frac{((a+b)c-2b)}{(1-c)\sqrt{d}}}\left[\arctan\left(\frac{2(1-c)+b-a+2(a-b+c-2)u}{\sqrt{d}}\right)-\arctan\left(\frac{2(1-c)+b-a}{\sqrt{d}}\right)\right].\]
Since $\alpha(1;a,b,c)=-\frac{c}{2(1-c)}\log(1-c)$ 
we obtain that 
\[
\log\left(\frac{L_2^-(1)}{L_1^+(1)}\right)=\beta(1;-a,-b,c)-\beta(1;a,b,c)=\frac{((a+b)c-2b)}{(1-c)\sqrt{d}}F(a,b,c),\]
where $F(a,b,c)$ is the following function
\[\arctan\left(\frac{b-a-2}{\sqrt{d}}\right)-\arctan\left(\frac{b-a+2-2c}{\sqrt{d}}\right)+\arctan\left(\frac{-b+a-2}{\sqrt{d}}\right)-\arctan\left(\frac{-b+a+2-2c}{\sqrt{d}}\right).\]
Notice that $F(a,b,c)=G(c,e)$ only depends on $c$ and $e=b-a$ because $d=4-4c-e^2$. It can be checked that $\partial_cG=\partial_eG=0$ so that $G$, and consequently $F$, is constant. Evaluating at $e=0$ we obtain that $F\equiv -\pi$.

To compute $\gamma_-$ it suffices to apply  to $X$ the symmetry $(x,y)\mapsto (x,-y)$ that transforms the invariants $(a,b,c)$ into $(-a,-b,c)$ and the value $\gamma_0$ into $-\gamma_0$.
\end{proof}

  \section{Examples and applications}

 \begin{ex}\label{hyp}
For each integer $n\ge 3$ we consider the vector field 
 \[X_n=(x+y)^2\partial_x+y^n\partial_y\]
 having a degenerate singularity at the origin
with invariants $(a,b,c)=(2,0,0)\in\{d=0\}\cap\{a^2-b^2=4\}$. 

It can be checked that in the resolution of singularities of $X_3$ appears a saddle-node whose weak separatrix is not the strict transform of $y=0$  and meets transversely the exceptional divisor. Consequently the origin is not a fake saddle for the vector field $X_3$. 
  
 On the other hand, the blowup of the vector field $X_4$ 
 %has transcendent first integral 
%\[\frac{\left(x +y \right) \tan \! \left(\frac{1}{y}\right)+x y}{\tan \! \left(\frac{1}{y}\right) x y -x -y}\]
%and its blow-up
\[Y_0=\frac{(v,uv)^*X_4}{v}=(-(u+1)^2+u^3v^2)u\partial_u+(u+1)^2v\partial_v\]
has two singular points  on the exceptional divisor $v=0$: $(u,v)=(0,0)$ which is a hyperbolic saddle and $(u,v)=(-1,0)$ which is degenerated. Moreover,
\[Y_1=(u-1,v)^*Y_0=(u^2+v^2-u^3-4uv^2+6u^2v^2-4u^3v^2+u^4v^2)\partial_u+u^2v\partial_v\]
has invariants $(a,b,c)=(0,0,0)$, so that $d=4$ and the transition map along $v=0$ is well-defined for~$Y_1$. Thus, the origin is a fake saddle of $X_4$, see Figure~\ref{X0}.
This example (for which $a_{1,1}:=a=2$, $h_{0,1}:=b=0$ and $h_{1,0}:=c=0$) contradicts the following claim of~\cite{CGP25}:
``it is readily seen that a necessary condition for the origin to be a fake singularity on the singular fiber $y = 0$ is that [...] either $(h_{0,1} -a_{1,1})^2 + 4(h_{1,0} - 1) <0$, or that $h_{1,0} = 1$ and $h_{0,1} = a_{1,1}$.''
\begin{figure}[t]
\begin{center}
\includegraphics[width=8cm]{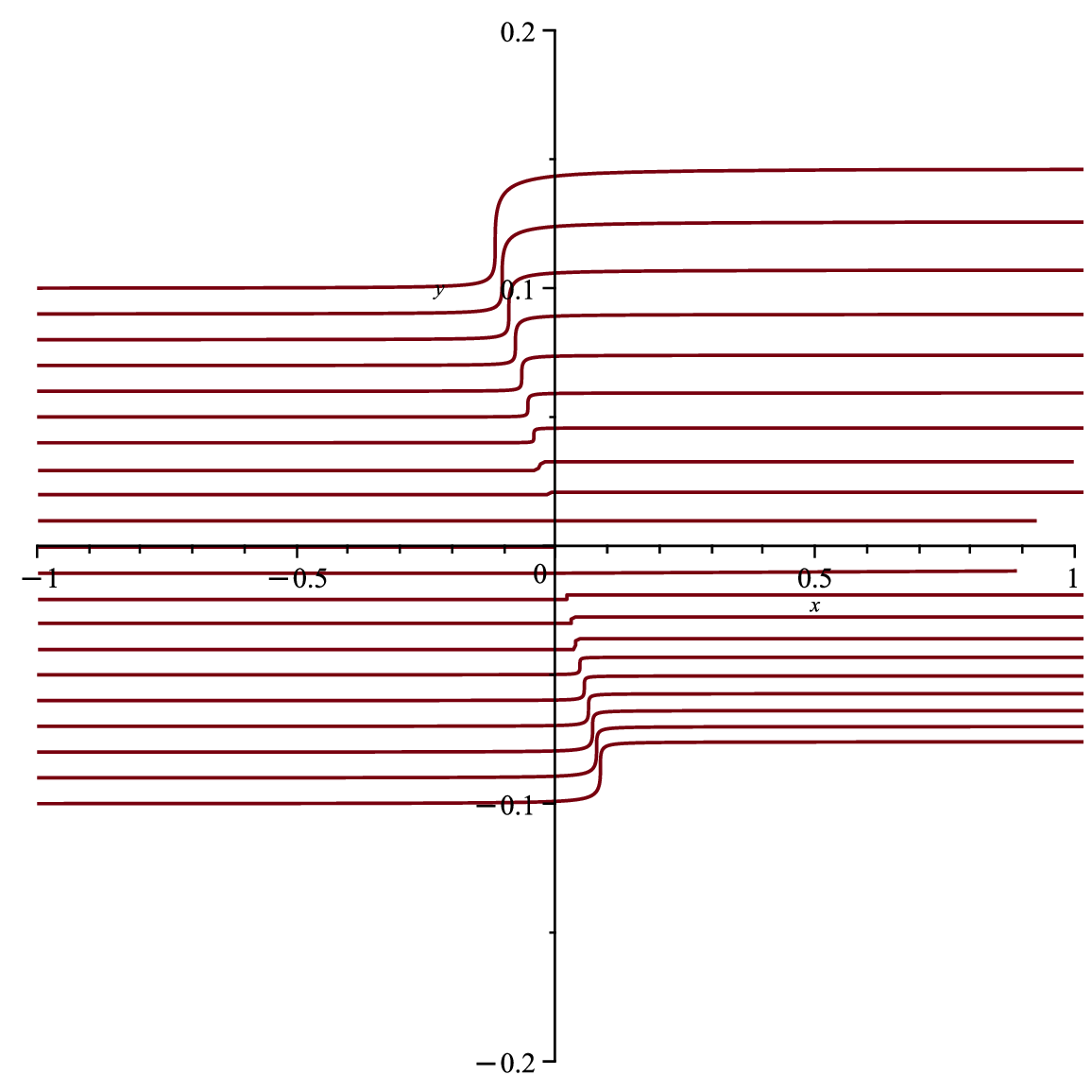}
\end{center}
\caption{Numerical solutions of $X_4=(x+y)^2\partial_x+y^4\partial_y$.}\label{X0}
\end{figure}

\noindent Moreover,  since $a=b=0$, according to Theorem~\ref{coef}, the derivative at $v=0$ of the transition map $\Pi_\alpha^\omega(v)$ of $Y_1$ is
\[\exp\mr{PV}\int_\alpha^\omega\frac{u^2}{u^2-u^3}du=\left|\frac{1-\alpha}{1-\omega}\right|,\]
so that it is contractive or repulsive on both sides $\pm v>0$. Notice that the transition map $\Pi_{-1}^{+1}(y)$ of $X_4$ is contractive on one side $y>0$ and repulsive on the other side $y<0$, see Figure~\ref{X0}. 
 \end{ex}

\begin{figure}[h]
\begin{center}
\includegraphics[width=5.5cm]{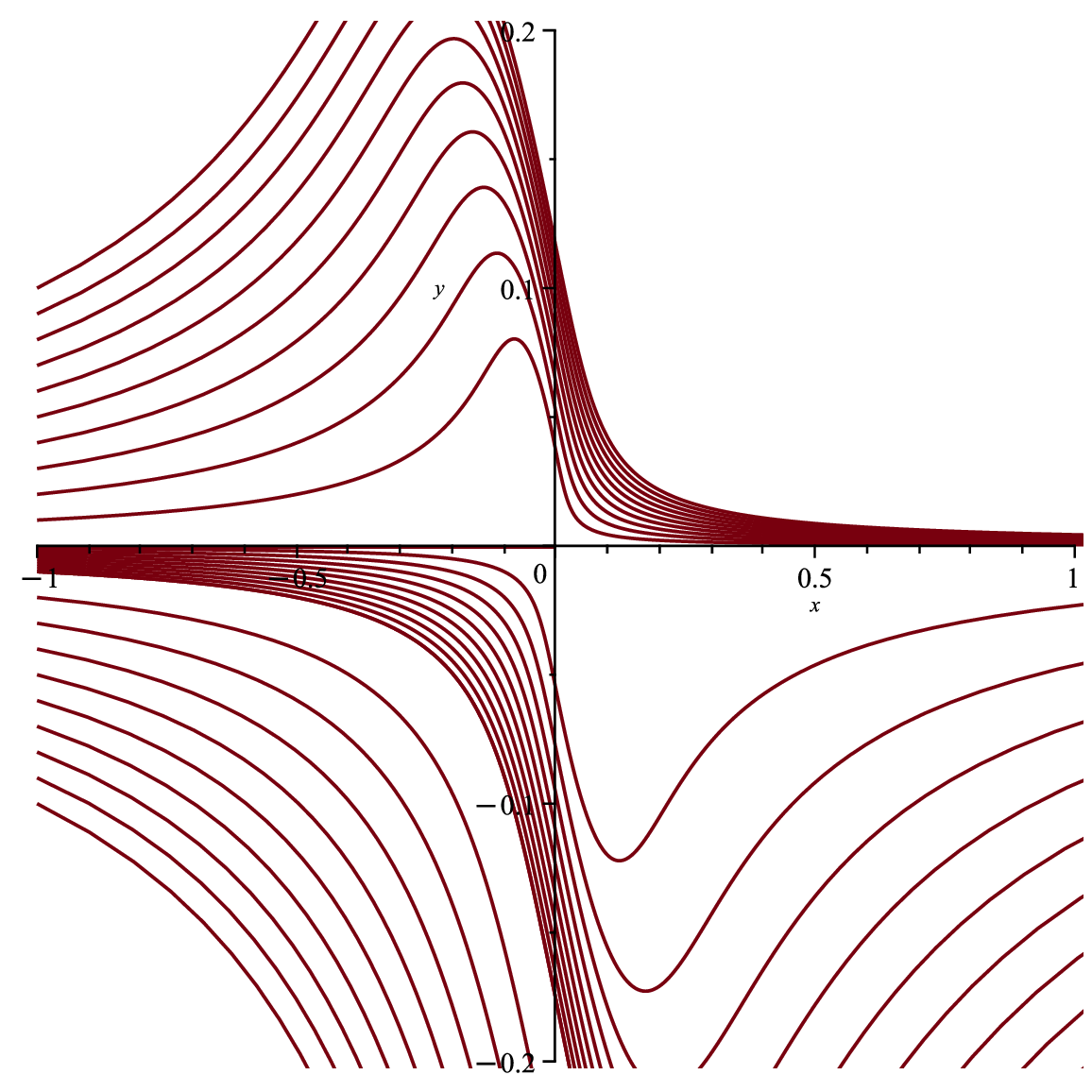}
\end{center}
\caption{Local phase portrait of the vector field
% \\[1mm]\centerline{
$(x^2+y^2+xy)\partial_x-(x+y)y\partial_y$
%}\\[1mm]
having first integral $\ln \! \left(y^2(2 x^{2}+2 x y +y^{2})\right)-2 \arctan \! \left(\frac{x +y}{x}\right)$.}\label{num}
\end{figure}

\begin{ex}\label{ex6}
Consider the quadratic homogeneous vector field $X=(x^2+y^2+axy)\partial_x+(cx+by)y\partial_y$ with $d=4(1-c)-(a-b)^2>0$. Taking $\alpha=-1$ and $\omega=1$ we have that
$\mr{PV}\int_\alpha^\omega\frac{g_1(x,0)}{f_1(x,0)}\frac{dx}{x}=\mr{PV}\int_{-1}^1\frac{cx}{x^2}dx=0$. Taking also  $a=1$, $b=-1$, $c=-1$ (so that $d=4$)  the transition map  $\Pi_{-1}^{+1}$ is contractive for $y>0$ and expansive for $y<0$ in accordance with 
 $\Pi_{-1}^{+1}(y)=e^{\gamma_\pm}y+\F_{1+\epsilon}$, where $\gamma_\pm=0\pm\pi\frac{(-2(-1)-(1-1)(-1))}{2}=\pm\pi$ depending on the sign $\pm$ of $y$, see Figure~\ref{num}.
\end{ex}

To finish this work we apply Theorems~\ref{car} and~\ref{coef} to the study of the following family of  degenerated singularities considered in~\cite{GMM02}:
\begin{equation}\label{Z}
Z_\mu:\left\{\begin{array}{l}\dot x=\beta x^2y+\alpha xy^2-\beta y^3-x^4,\\
\dot y=4\beta xy^2+\alpha y^3+2x^5,
\end{array}\right.\qquad \mu=(\alpha,\beta)\in\R^2.
\end{equation}
According to \cite[p. 189]{GMM02}, the origin is monodromic for $Z_\mu$ if and only if $\beta>1/4$.

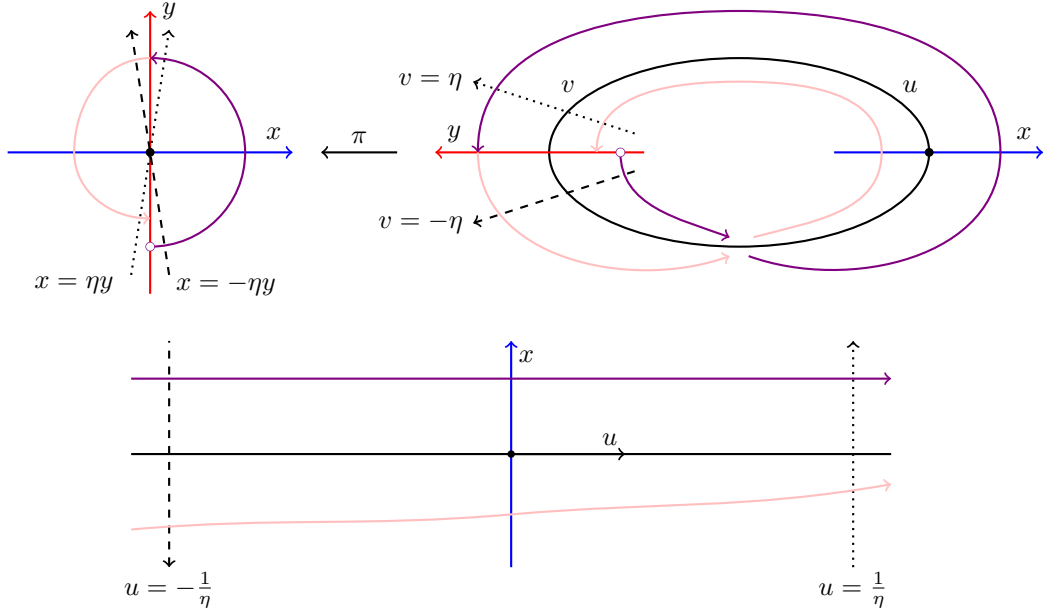
\begin{figure}[t]
\begin{center}
\begin{tikzpicture}
%\draw[help lines] (-7,-6) grid (7,2);
\begin{scope}[thick,xshift=-4.75cm,scale=1.25]
\draw[->,blue] (-1.5,0) to (1.5,0);
\draw[->,red] (0,-1.5) to (0,1.5);
\draw[fill] (0,0) circle [radius=1pt];
\draw[fill,violet] (0,-1) circle [radius=1.1pt];
\draw[->,violet] (0,-1) to [out=0,in=-90] (1,0) to [out=90,in=0] (0,1);
\draw[fill,white] (0,-1) circle [radius=.9pt];
\draw[->,pink] (0,1) to [out=180,in=90] (-.8,0) to [out=-90,in=180] (0,-.7);
\node at (1.3,.2) {$x$};
\node at (0.2,1.5) {$y$};
\draw[dotted,->] (-.2,-1.3) to (.2,1.3);
\draw[dashed,->] (.2,-1.3) to (-.2,1.3);
\node at (.8,-1.4) {$x=-\eta y$};
\node at (-.8,-1.4) {$x=\eta y$};
\end{scope}
\draw[thick,<-] (-2.5,0) to (-1.5,0);
\node at (-2,.2) {$\pi$};
\begin{scope}[thick, xshift=3cm,scale=1.25]
\draw (0,0) ellipse [x radius=2cm, y radius=1cm];
\draw[->,blue] (1,0) to (3.2,0);
\draw[->,red] (-1,0) to (-3.2,0);
\draw[fill] (2,0) circle [radius=1pt];
\draw[fill,violet] (-1.25,0) circle [radius=1.1pt];
\draw[->,violet] (-1.25,0) to [out=-90,in=160] (-0.1,-.9);
\draw[->,violet] (.1,-1.1) to [out=-20,in=-90] (2.75,0) to [out=90,in=0] (0,1.5) to [out=180,in=90] (-2.75,0);
\draw[fill,white] (-1.25,0) circle [radius=.9pt];
\draw[->,pink] (-2.75,0) to [out=-90,in=200] (-.1,-1.1);
\draw[->,pink] (.15,-.9) to [out=15,in=-90] (1.5,0) to [out=90,in=0] (0,.75) to [out=180,in=90] (-1.5,0);
\node at (3,.2) {$x$};
\node at (-3,.2) {$y$};
\node at (1.8,.7) {$u$};
\node at (-1.8,.7) {$v$};
\draw[->,dotted] (-1.1,.2) to (-2.8,.75);
\draw[->,dashed] (-1.1,-.2) to (-2.8,-.75);
\node at (-3.25,.75) {$v=\eta$};
\node at (-3.35,-.75) {$v=-\eta$};
\end{scope}
\begin{scope}[yshift=-4cm,thick]
\draw (-5,0) to (5,0);
\draw[->,blue] (0,-1.5) to (0,1.5);
\draw[->,dotted] (4.5,-1.5) to (4.5,1.5);
\draw[<-,dashed] (-4.5,-1.5) to (-4.5,1.5);
\draw[->] (0,0) to (1.5,0);
\node at (1.3,.2) {$u$};
\node at (0.2,1.3) {$x$};
\node at (-4.5,-1.8) {$u=-\frac{1}{\eta}$};
\node at (4.5,-1.8) {$u=\frac{1}{\eta}$};
\draw[fill] (0,0) circle [radius=1pt];
\draw[->,violet] (-5,1) to (5,1);
\draw[->,pink] (-5,-1) to [out=5,in=185] (0,-.8) to [out=5,in=190] (5,-.4);
\end{scope}
\end{tikzpicture}
\end{center}
\caption{Blowup of the origin for the family $Z_\mu$ in the charts $\pi(x,u)=(x,ux)$ and $\pi(v,y)=(vy,y)$.}\label{doble}
\end{figure}
Blowing up the origin, see Figure~\ref{doble}, we have that
\[Y_\mu=\frac{(x,ux)^*Z_\mu}{x^2}=(3\beta u^2+\beta u^4+ux+2x^2)\partial_u+(\beta u+\alpha u^2-\beta u^3-x)x\partial_x\]
and, assuming that $\beta>0$,
\[
X_\mu=\Big(\frac{x}{3\beta},\frac{y}{\sqrt{6\beta}}\Big)^*Y_\mu
=\left(x^2+y^2+\frac{xy}{\sqrt{6\beta}}+\frac{x^4}{27\beta^2}\right)\partial_x+\left(\frac{x}{3}-\frac{y}{\sqrt{6\beta}}+\frac{\alpha x^2}{9\beta^2}-\frac{x^3}{27\beta^2}\right)y\partial_y
\]
is of the form (\ref{X}) with
$f_1(x,y;\mu)=1+\frac{x^2}{27\beta^2}$, $f_2(x,y;\mu)\equiv 1$, $a(\mu)=\frac{1}{\sqrt{6\beta}}$, $g_1(x,y;\mu)=\frac{1}{3}+\frac{\alpha x}{9\beta^2}-\frac{x^2}{27\beta^2}$, $g_2(y;\mu)\equiv b(\mu)=-\frac{1}{\sqrt{6\beta}}$ and $c(\mu)=\frac{1}{3}$. 

Notice that $f_1(x,0;\mu)\neq 0$ for all $x\in\R$ and $d(\mu)=4(1-c(\mu))-(a(\mu)-b(\mu))^2=\frac{2}{3}(4-\frac{1}{\beta})$ is positive if and only if  $\beta>\frac{1}{4}$.
By applying Theorem~\ref{car} we also deduce that the origin is monodromic if and only if  $\beta>\frac{1}{4}$.
In that case we can apply Theorem~\ref{coef} and consider the limit (as $\eta\to0^+$) transition map $\Pi_{-\infty}^{+\infty}(y;\mu)=e^{\gamma_\pm(\mu)} y+\F_{1+\epsilon}^\infty(\{\beta>\frac{1}{4}\})$ of $X_\mu$, with
\[\gamma_\pm(\mu)=\mr{PV}\int_{-\infty}^{+\infty}\frac{\frac{1}{3}+\frac{\alpha x}{9\beta^2}-\frac{x^2}{27\beta^2}}{1+\frac{x^2}{27\beta^2}}\frac{dx}{x}\mp\frac{\pi}{\sqrt{4\beta-1}}=\pi\left(\frac{\alpha}{\beta\sqrt{3}}\mp\frac{1}{\sqrt{4\beta-1}}\right).\]
Assuming that $\beta>\frac{1}{4}$, the Poincar\'e return map $R(x;\mu)$ of $Z_\mu$ around $(0,0)$ is the composition of two transition maps  $\Pi_{-\infty}^{+\infty}(x;\mu)=e^{\gamma_\pm(\mu)}x+\F_{1+\epsilon}^\infty(\{\beta>\frac{1}{4}\})$ of $X_\mu$, one for $x>0$ and the other for $x<0$, see Figure~\ref{doble}, so that
$R(x;\mu)=e^{\gamma(\mu)} x+\F_{1+\epsilon}^\infty(\{\beta>\frac{1}{4}\})$ with $\gamma(\mu)=\gamma_+(\mu)+\gamma_-(\mu)=\frac{2\pi\alpha}{\beta\sqrt{3}}$, in agreement with \cite[p. 189]{GMM02}. 

Notice that for any $\beta>\frac{1}{4}$ the origin is a center of $Z_{(0,\beta)}$  because it is reversible via $(x,y,t)\mapsto (-x,y,-t)$.
Moreover, thanks to the uniform properties of the flat remainder class $\F_{1+\epsilon}^\infty$ we can assert that the ciclicity of the origin in the family (\ref{Z}) is  zero at any $\mu=(\alpha,\beta)\in\R^2$ with $\beta>\frac{1}{4}$. This last assertion can not be deduced from the results in \cite{GMM02} because they are not uniform with respect to parameters.

\bibliographystyle{plain}

\end{document}